\newcommand{\R}{\mathbb{R}}
\newcommand{\Z}{\mathbb{Z}}
\newcommand{\N}{\mathbb{N}}
\newcommand{\Q}{\mathbb{Q}}
\newcommand{\T}{\mathbb{T}}
\newcommand{\D}{\mathcal{D}}
\newcommand{\B}{\mathcal{B}}
\renewcommand{\L}{\mathcal{L}}
\newcommand{\const}{{\rm const}}
\renewcommand{\div}{{\rm div}}
\newcommand{\esslim}{\mathop{\rm ess\,lim}}
\newcommand{\sign}{{\rm sign}\,}
\def\Xint#1{\mathchoice
{\XXint\displaystyle\textstyle{#1}}%
{\XXint\textstyle\scriptstyle{#1}}%
{\XXint\scriptstyle\scriptscriptstyle{#1}}%
{\XXint\scriptscriptstyle\scriptscriptstyle{#1}}%
\!\int}
\def\XXint#1#2#3{{\setbox0=\hbox{$#1{#2#3}{\int}$ }
\vcenter{\hbox{$#2#3$ }}\kern-.57\wd0}}
\def\dashint{\Xint-}
\numberwithin{equation}{section}
\theoremstyle{plain}
\newtheorem{theorem}{Theorem}[section]
\theoremstyle{definition}
\newtheorem{definition}{Definition}[section]
\newtheorem{remark}{Remark}[section]
\date\empty
\title{On the long time behavior of almost periodic entropy solutions to scalar conservation laws}
\author{Evgeny Yu.~Panov\footnote{Novgorod State University, e-mail: Eugeny.Panov@novsu.ru}}
\begin{document}

\maketitle
\begin{abstract}We found the precise condition for the decay as $t\to\infty$ of Besicovitch almost periodic entropy solutions of multidimensional scalar conservation laws. Moreover, in the case of one space variable we establish asymptotic convergence of the entropy solution to a traveling wave (in the Besicovitch norm). Besides, the flux function turns out to be affine on the minimal segment containing the essential range of the limit profile while the speed of the traveling wave coincides with the slope of the flux function on this segment.
\end{abstract}

\section{Introduction}
\label{sec1}
In the half-space $\Pi=\R_+\times\R^n$, where $\R_+=(0,+\infty)$, we consider a conservation law
\begin{equation}
\label{1}
u_t+\div_x\varphi(u)=0, \quad
u=u(t,x), \ (t,x)\in\Pi.
\end{equation}
The flux vector $\varphi(u)=(\varphi_1(u),\ldots,\varphi_n(u))$ is supposed to be merely continuous: $\varphi(u)\in C(\R,\R^n)$. Recall the notion of Kruzhkov entropy solution of the Cauchy problem for equation (\ref{1}) with initial condition
\begin{equation}
\label{1I}
u(0,x)=u_0(x)\in L^\infty(\R^n).
\end{equation}

\begin{definition}[\cite{Kr}]\label{def1}
A bounded measurable function $u=u(t,x)\in L^\infty(\Pi)$ is called an entropy solution (e.s.) of (\ref{1}), (\ref{1I}) if for all $k\in\R$
\begin{equation}
\label{2}
\frac{\partial}{\partial t} |u-k|+\div_x [\sign(u-k)(\varphi(u)-\varphi(k))]\le 0
\end{equation}
in the sense of distributions on $\Pi$ (in $\D'(\Pi)$), and
\begin{equation}
\label{I}
\esslim_{t\to 0+}u(t,\cdot)=u_0 \quad \mbox{ in } L^1_{loc}(\R^n).
\end{equation}
\end{definition}

Here $\sign u=\left\{\begin{array}{rr} 1, & u>0, \\ -1, & u\le 0
\end{array}\right.$
and relation (\ref{2}) means that for each test function $h=h(t,x)\in C_0^1(\Pi)$,
$h\ge 0$,
$$
\int_\Pi \,[|u-k|h_t+\sign(u-k)(\varphi(u)-\varphi(k))\cdot\nabla_x h]dtdx\ge 0,
$$
where $\cdot$ denotes the inner product in $\R^n$.

Taking in (\ref{2}) $k=\pm R$, where $R\ge\|u\|_\infty$, we obtain that $u_t+\div_x\varphi(u)=0$ in $\D'(\Pi)$, that is an e.s. $u=u(t,x)$ is a weak solutions of this equation as well.

The existence of e.s. of (\ref{1}), (\ref{1I}) follows from the general result of \cite[Theorem~3]{PaIzv}. In the case under consideration when the flux vector is only continuous the effect of infinite speed of propagation appears, which may even leads to the nonuniqueness of e.s. if $n>1$, see examples in  \cite{KrPa1,KrPa2,PaIzv}, where exact sufficient conditions of the uniqueness were also found. Nevertheless, if an initial function $u_0$ is periodic in $\R^n$ (~at least in $n-1$ independent directions~), then the e.s. of (\ref{1}), (\ref{1I}) is unique and $x$-periodic, see \cite{PaMax1}, as well as the more general result \cite[Theorem~11]{PaIzv}.

We will study problem (\ref{1}), (\ref{1I}) in the class of Besicovitch almost periodic functions.
Let $C_R$ be the cube
$$\{ \ x=(x_1,\ldots,x_n)\in\R^n \ | \ |x|_\infty=\max_{i=1,\ldots,n}|x_i|\le R/2 \ \}, \quad R>0.$$
We define the seminorm
$$N_1(u)=\limsup_{R\to +\infty}R^{-n}\int_{C_R} |u(x)|dx, \quad u(x)\in L^1_{loc}(\R^n).
$$
Recall (~see \cite{Bes,Lev}~) that the Besicovitch space $\B^1(\R^n)$ is the closure of trigonometric polynomials, i.e. finite sums $\sum a_\lambda e^{2\pi i\lambda\cdot x}$ with ${i^2=-1}$, $\lambda\in\R^n$, in the quotient space  $B^1(\R^n)/B^1_0(\R^n)$, where
$$
B^1(\R^n)=\{ u\in L^1_{loc}(\R^n) \ | \ N_1(u)<+\infty \}, \ B^1_0(\R^n)=\{ u\in L^1_{loc}(\R^n) \ | \ N_1(u)=0 \}.
$$
The space $\B^1(\R^n)$ is equipped with the norm $\|u\|_1=N_1(u)$ (~we identify classes in the quotient space $B^1(\R^n)/B^1_0(\R^n)$ and their representatives~). The space $\B^1(\R^n)$ is a Banach space, it is isomorphic to the completeness of the space $AP(\R^n)$ of Bohr almost periodic functions with respect to the norm  $N_1$. It is known (see for instance \cite{Bes}~) that for each function $u\in \B^1(\R^n)$ there exists the mean value
$$\bar u=\dashint_{\R^n} u(x)dx\doteq\lim\limits_{R\to+\infty}R^{-n}\int_{C_R} u(x)dx$$ and, more generally, the Bohr-Fourier coefficients
$$
a_\lambda=\dashint_{\R^n} u(x)e^{-2\pi i\lambda\cdot x}dx, \quad\lambda\in\R^n.
$$
The set
$$ Sp(u)=\{ \ \lambda\in\R^n \ | \ a_\lambda\not=0 \ \} $$ is called the spectrum of an almost periodic function  $u(x)$. It is known \cite{Bes}, that the spectrum $Sp(u)$ is at most countable.

Now we assume that the initial function $u_0(x)\in\B^1(\R^n)\cap L^\infty(\R^n)$. Let $\displaystyle I=\dashint_{\R^n} u_0(x)dx$, and $M_0$ be the smallest additive subgroup of $\R^n$ containing $Sp(u_0)$.

It was shown in \cite{PaJHDE1} that an e.s. $u(t,x)$ of (\ref{1}), (\ref{1I}) is almost periodic with respect to spatial variables. Moreover, $u(t,x)\in C([0,+\infty),\B^1(\R^n))$ (after possible correction on a set of null measure) and ${Sp(u(t,\cdot))\subset M_0}$, $\displaystyle \dashint_{\R^n} u(t,x)dx=I$ for all $t\ge 0$.
The uniqueness of e.s. $u(t,x)$ in the space $C([0,+\infty),\B^1(\R^n))$ is a consequence of the following general result  \cite[Proposition~1.3]{PaJHDE1}, which holds for arbitrary bounded and measurable initial functions.

\begin{theorem}\label{th1}
Let $u(t,x),v(t,x)\in L^\infty(\Pi)$ be e.s. of (\ref{1}), (\ref{1I}) with initial functions $u_0(x), v_0(x)\in L^\infty(\R^n)$, respectively. Then for a.e. $t>0$
\begin{equation}\label{m-contr}
N_1(u(t,\cdot)-v(t,\cdot))\le N_1(u_0-v_0).
\end{equation}
\end{theorem}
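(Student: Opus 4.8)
The plan is to reduce the comparison of the two solutions to a single \emph{Kato inequality}, and then to integrate that inequality against a test function adapted to the cubes $C_R$, exploiting the fact that the failure of finite speed of propagation becomes harmless after the $R^{-n}$ normalization built into $N_1$. First I would run Kruzhkov's doubling of variables: writing the entropy inequality (\ref{2}) for $u(t,x)$ with $k=v(s,y)$ and for $v(s,y)$ with $k=u(t,x)$, testing against functions concentrated near the diagonal $\{t=s,\ x=y\}$, and passing to the diagonal limit. The output is that for every $h=h(t,x)\in C_0^1(\Pi)$ with $h\ge 0$,
\[
\int_\Pi\bigl[|u-v|\,h_t+\sign(u-v)(\varphi(u)-\varphi(v))\cdot\nabla_x h\bigr]\,dt\,dx\ge 0 .
\]
Since $u,v\in L^\infty(\Pi)$ and $\varphi$ is continuous, the range of $u$ and $v$ is bounded, so $\|\varphi(u)-\varphi(v)\|_{L^\infty(\Pi)}\le K<\infty$; this boundedness is the only property of the flux I will use, which is why mere continuity suffices to close the doubling argument.

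Next I would insert a localized test function. For $R>0$ I take a cut-off $g_R\in C_0^1(\R^n)$ with $\mathbf 1_{C_R}\le g_R\le \mathbf 1_{C_{R+2}}$ and $|\nabla g_R|\le C$, so that $\nabla g_R$ is supported on the shell $C_{R+2}\setminus C_R$, whose volume is $O(R^{n-1})$. With $h(t,x)=\psi(t)g_R(x)$, $0\le\psi\in C_0^1(\R_+)$, and $F_R(t)=R^{-n}\int_{\R^n} g_R(x)\,|u(t,x)-v(t,x)|\,dx$, the Kato inequality becomes
\[
\int_0^\infty \psi'(t)F_R(t)\,dt+R^{-n}\!\int_\Pi \psi(t)\,\sign(u-v)(\varphi(u)-\varphi(v))\cdot\nabla g_R\,dt\,dx\ge 0 .
\]
The second term is bounded in modulus by $K R^{-n}\!\int|\nabla g_R|\,dx\cdot\|\psi\|_{L^1}=O(R^{-1})\|\psi\|_{L^1}$, so I obtain $\int_0^\infty\psi'F_R\,dt\ge -C_1R^{-1}\|\psi\|_{L^1}$ for all admissible $\psi$, i.e.\ the distributional inequality $F_R'\le C_1/R$ on $(0,\infty)$.

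Finally I would integrate this in time and pass to the limit. The inequality $F_R'\le C_1/R$ says that $t\mapsto F_R(t)-C_1t/R$ has a non-increasing representative, so for a.e.\ $t>0$ and a sequence $s\downarrow 0$ of good points $F_R(t)\le F_R(s)+C_1t/R$. By the initial condition (\ref{I}) and the compact support of $g_R$ one has $F_R(s)\to R^{-n}\int g_R|u_0-v_0|\,dx$ as $s\to 0+$, hence for a.e.\ $t$ and every $R\in\N$,
\[
R^{-n}\!\int_{C_R}|u(t)-v(t)|\,dx\ \le\ F_R(t)\ \le\ R^{-n}\!\int_{C_{R+2}}|u_0-v_0|\,dx+\frac{C_1t}{R}.
\]
Discarding the countable union of exceptional null sets and letting $R\to\infty$ along integers gives $N_1(u(t,\cdot)-v(t,\cdot))$ on the left (the step from integer to real $R$ being justified by the monotonicity of $C_R$ in $R$ and the fact that volume ratios of nested cubes tend to $1$), while on the right $C_1t/R\to 0$ and $\limsup_R R^{-n}\int_{C_{R+2}}|u_0-v_0|=N_1(u_0-v_0)$ since $(R+2)^n/R^n\to 1$; this is exactly (\ref{m-contr}). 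I expect the genuine obstacle to be conceptual rather than computational: because $\varphi$ is only continuous there is no finite speed of propagation, so one cannot engulf the spreading region $\{u\ne v\}$ by a compactly supported cut-off in the usual Kruzhkov way. The device that rescues the estimate is precisely the bound $\int|\nabla g_R|\,dx=O(R^{n-1})$ together with the boundedness of $\varphi(u)-\varphi(v)$, which makes the flux term vanish after dividing by $R^n$; a secondary technical nuisance is the careful bookkeeping of the a.e.-in-$t$ statements and the interchange of the $R\to\infty$ limsup with the $t$-evolution, handled by using the monotone representatives of $F_R$ over a countable set of radii.
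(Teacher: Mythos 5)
Your proof is correct and follows essentially the same route as the paper: Kruzhkov doubling of variables to obtain the Kato inequality, a cut-off adapted to $C_R$ whose gradient has total mass $O(R^{n-1})$ so that the flux term becomes $O(R^{-1})$ after the $R^{-n}$ normalization, and a distributional Gronwall-type integration in $t$ over a countable family of radii. The only cosmetic difference is that your fixed-width shell $C_{R+2}\setminus C_R$ makes the volume ratio tend to $1$ automatically, whereas the paper uses a dilated cut-off $g(x/R)$ supported in $C_{kR}$ and concludes by letting $k\to 1^+$.
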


For completeness we reproduce the proof.
\begin{proof}
Applying Kruzhkov doubling of variables method, we obtain the relation (~see \cite{Kr,PaIzv}~)
\begin{equation}\label{3}
|u-v|_t+\div_x[\sign(u-v)(\varphi(u)-\varphi(v))]\le 0 \ \mbox{ in } \D'(\Pi).
\end{equation}
We choose a function $g(y)\in C_0^1(\R^n)$ such that $0\le g(y)\le 1$, and $g(y)\equiv 1$ in the cube $C_1$, $g(y)\equiv 0$ in the complement of the cube $C_k$, $k>1$, and a function $h=h(t)\in C_0^1(\R_+)$, $h\ge 0$. Applying (\ref{3}) to the test function $f=R^{-n}h(t)g(x/R)$ with $R>0$, we obtain
\begin{eqnarray}\label{3i}
\int_0^\infty\left(R^{-n}\int_{\R^n} |u(t,x)-v(t,x)| g(x/R)dx\right)h'(t)dt+ \nonumber\\ R^{-n-1}\int_\Pi \sign(u-v)(\varphi(u)-\varphi(v))\cdot\nabla_y g(x/R)h(t)dtdx\ge 0.
\end{eqnarray}
Making the change $y=x/R$ in the last integral in (\ref{3i}), we derive the estimate
\begin{eqnarray}\label{3ii}
R^{-n-1}\left|\int_\Pi\sign(u-v)(\varphi(u)-\varphi(v))\cdot\nabla_y g(x/R)h(t)dtdx\right|\le\nonumber\\ R^{-1}\|\varphi(u)-\varphi(v)\|_\infty\int_\Pi|\nabla_y g|(y)h(t)dtdy\le\frac{A}{R}\int_0^{+\infty} h(t)dt,
\end{eqnarray}
where $A=\|\varphi(u)-\varphi(v)\|_\infty\int_{\R^n} |\nabla_y g|(y)dy$. Here and below we use the notation $|z|$ for the Euclidian norm of a finite-dimensional vector $z$. Let $$ I_R(t)=R^{-n}\int_{\R^n} |u(t,x)-v(t,x)| g(x/R)dx.$$ From (\ref{3i}) and (\ref{3ii}) it follows that
$$
\int_0^{+\infty}(I_R(t)-At/R)h'(t)dt=\int_0^{+\infty}I_R(t)h'(t)dt+\frac{A}{R}\int_0^{+\infty} h(t)dt\ge 0
$$
for all $h(t)\in C_0^1((0,+\infty))$, $h(t)\ge 0$. This means that the generalized derivative $\frac{d}{dt}(I_R(t)-At/R)\le 0$, which readily implies that there exists a set $F\subset (0,+\infty)$ of full Lebesgue measure (~which can be defined as the set of common Lebesgue points of functions $I_R(t)$, $R\in\Q$~) such that $\forall t_2,t_1\in F$, $t_2>t_1$, $\forall R\in\Q$
$I_R(t_2)-At_2/R\le I_R(t_1)-At_1/R$, that is $I_R(t_2)\le I_R(t_1)+A(t_2-t_1)/R$. By the evident continuity of  $I_R(t)$ with respect to $R$ the latter relation remains valid for all $R>0$. In the limit as $F\ni t_1\to 0$ we obtain, taking into account the initial conditions for e.s. $u,v$, that  $\forall t_2=t\in F$ for all $R>0$
\begin{equation}\label{4}
I_R(t)\le I_R(0)+At/R,
\end{equation}
where $\displaystyle I_R(0)=R^{-n}\int_{\R^n} |u_0(x)-v_0(x)| g(x/R)dx$.
By the properties of $g(y)$ we find the inequalities
\begin{eqnarray*}
R^{-n}\int_{C_R}|u(t,x)-v(t,x)|dx\le I_R(t)\le \\ R^{-n}\int_{C_{kR}}|u(t,x)-v(t,x)|dx=k^n (kR)^{-n}\int_{C_{kR}}|u(t,x)-v(t,x)|dx,
\end{eqnarray*}
which imply that
\begin{equation}\label{5}
N_1(u(t,\cdot)-v(t,\cdot))\le \limsup_{R\to+\infty} I_R(t)\le k^n N_1(u(t,\cdot)-v(t,\cdot)).
\end{equation}
In view of (\ref{5}) we derive from (\ref{4}) in the limit as $R\to+\infty$ that
$N_1(u(t,\cdot)-v(t,\cdot))\le k^n N_1(u_0-v_0)$ for all $t\in F$. To complete the proof it only remains to notice that $k>1$ is arbitrary.
\end{proof}

\begin{remark} \label{rem1}
As was established in \cite[Corollary~7.1]{PaJHDE}, after possible correction on a set of null measure
any e.s.  $u(t,x)\in C(\R_+,L^1_{loc}(\R^n))$. In particular, without loss of generality, we may claim that
relation (\ref{4}) holds for all $t>0$. This implies in the limit as $R\to+\infty$ that
the statement of Theorem~\ref{th1} holds for all $t>0$ as well.  The continuity property allows also to replace the essential limit in initial condition (\ref{I}) by the usual one.
\end{remark}

The main our results are contained in the following two theorems~\ref{th2},~\ref{th3}.

\begin{theorem}\label{th2}
Assume that the following non-degeneracy condition holds for the flux components in ``resonant'' directions $\xi\in M_0$:
\begin{eqnarray}\label{ND}
\forall\xi\in M_0, \xi\not=0 \mbox{ the functions } u\to \xi\cdot\varphi(u) \nonumber\\
\mbox{ are not affine in any vicinity of } I=\overline{u_0}.
\end{eqnarray}
Then an e.s. $u(t,x)\in C([0,+\infty),\B^1(\R^n))$ satisfies the decay property
\begin{equation}\label{dec}
\lim_{t\to+\infty} u(t,\cdot)=I \ \mbox{ in } \B^1(\R^n).
\end{equation}
Condition (\ref{ND}) is precise: if it fails, then there exists an initial data $u_0\in\B^1(\R^n)\cap L^\infty(\R^n)$ with the properties $Sp(u_0)\subset M_0$, $\overline{u_0}=I$, such that the corresponding e.s. $u(t,x)$ of (\ref{1}), (\ref{1I}) does not satisfy (\ref{dec}).
\end{theorem}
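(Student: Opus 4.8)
The plan is to prove the two halves separately. For the decay under (\ref{ND}) I would run a Lyapunov--compactness--rigidity scheme. For every $c\in\R$ set $H_c(t)=\dashint_{\R^n}|u(t,x)-c|\,dx$; this mean exists since $u(t,\cdot)-c\in\B^1(\R^n)$ and $w\mapsto|w|$ is $1$-Lipschitz on $\B^1(\R^n)$. Testing the entropy inequality (\ref{2}) (~with $k=c$~) against $R^{-n}g(x/R)h(t)$ and arguing exactly as in the proof of Theorem~\ref{th1}, one obtains in the limit $R\to+\infty$ that each $H_c(t)$ is non-increasing; being non-negative it converges to some $H_c^\infty\ge 0$ as $t\to+\infty$. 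In particular $H_I(t)=N_1(u(t,\cdot)-I)$ decreases to $H_I^\infty$, and the whole point is to show $H_I^\infty=0$.

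Next I would extract an eternal limit solution in the Besicovitch class. Fixing a sequence $t_j\to+\infty$, consider the time-translates $u_j(s,x)=u(t_j+s,x)$, which are e.s.\ with uniformly bounded range, $Sp(u_j(s,\cdot))\subset M_0$ and mean $I$. Here (\ref{ND}) is decisive: together with the spectral localization $Sp\subset M_0$ it should give, through the strong-precompactness machinery for almost periodic entropy solutions (~velocity averaging / $H$-measures adapted to the Besicovitch class~), convergence of a subsequence of $u_j$ in $C_{loc}(\R;\B^1(\R^n))$ to an e.s.\ $v(s,x)$ defined for all $s\in\R$, with $Sp(v(s,\cdot))\subset M_0$ and mean $I$. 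Because $\dashint_{\R^n}|\cdot-c|\,dx$ is continuous on $\B^1(\R^n)$ and each $H_c$ converges, $s\mapsto\dashint_{\R^n}|v(s,x)-c|\,dx\equiv H_c^\infty$ is constant for every $c$; equivalently $v$ satisfies each inequality (\ref{2}) with equality, so it loses no entropy and is admissible both forward and backward in $s$.

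The rigidity step is to show that such an eternal, entropy-conserving almost periodic profile must be the constant $I$. The mechanism is that genuine nonlinearity of $u\mapsto\xi\cdot\varphi(u)$ near $I$ in every resonant direction $\xi\in M_0$ is incompatible with a bounded non-constant profile surviving for all $s\in\R$ without entropy loss: a shock-free eternal evolution would force characteristics to focus in one of the two time directions (producing dissipation), while a genuine traveling wave $w(\xi\cdot x-cs)$ would require $\xi\cdot\varphi$ to be affine on the range of $w$, contradicting (\ref{ND}); hence $v\equiv I$. Then $H_I^\infty=\dashint_{\R^n}|v(0,x)-I|\,dx=0$, and since $H_I(t)$ is monotone with limit $H_I^\infty$, we get $H_I(t)=N_1(u(t,\cdot)-I)\to 0$, which is (\ref{dec}). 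I expect the two genuinely hard points to be the strong precompactness with non-degeneracy only in the resonant directions (~where the spectral constraint, not merely the flux, must be exploited~) and this rigidity classification of eternal solutions.

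For sharpness, assume (\ref{ND}) fails, so there are $\xi\in M_0$, $\xi\neq 0$, $\delta>0$ and constants $a,b$ with $\xi\cdot\varphi(u)=au+b$ on $(I-\delta,I+\delta)$. I would take $u_0(x)=I+\varepsilon\cos(2\pi\,\xi\cdot x)$ with $0<\varepsilon<\delta$; then $Sp(u_0)=\{\pm\xi\}\subset M_0$, $\overline{u_0}=I$, and the range of $u_0$ stays in the affine region. For $u=u(t,\xi\cdot x)$ with values in that region one computes distributionally $\div_x\varphi(u)=\partial_y[\xi\cdot\varphi(u)]=a\,\partial_y u$ in $y=\xi\cdot x$ (~only the affine combination $\xi\cdot\varphi$ is differentiated~), so (\ref{1}) reduces to the linear transport equation $u_t+a\,u_y=0$, whose solution $u(t,x)=I+\varepsilon\cos\bigl(2\pi(\xi\cdot x-at)\bigr)$ is continuous, hence is the e.s.\ of (\ref{1}), (\ref{1I}), unique by Theorem~\ref{th1}. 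Since spatial translation is an isometry of $\B^1(\R^n)$, $N_1(u(t,\cdot)-I)=\varepsilon\,N_1(\cos(2\pi\,\xi\cdot x))>0$ is constant in $t$, so (\ref{dec}) fails. This exhibits the required initial datum and shows that (\ref{ND}) is precise.
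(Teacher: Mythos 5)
Your sharpness example is correct and essentially identical to the paper's (the paper uses $I+\delta\sin(2\pi\xi\cdot x)$ and the explicit traveling wave $I+\delta\sin(2\pi(\xi\cdot x-\tau t))$). The decay half, however, has a genuine gap: the two steps you yourself flag as ``genuinely hard'' --- strong precompactness of the time translates in $C_{loc}(\R;\B^1(\R^n))$ and the rigidity classification of eternal entropy-conserving profiles --- are not proved, and neither follows from existing machinery in the form you invoke it. The non-degeneracy hypothesis (\ref{ND}) is imposed only along the countable set $M_0\setminus\{0\}$, whereas the velocity-averaging / $H$-measure compactness results require non-degeneracy for essentially all directions $\xi\in\R^n$; a flux that is affine in every non-resonant direction is perfectly admissible here, so no compactness is available from the flux alone. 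The whole difficulty is to make the spectral constraint $Sp(u(t,\cdot))\subset M_0$ do the work that the flux cannot, and saying it ``should give'' compactness is precisely the missing argument. Moreover, your rigidity heuristic (characteristics focusing, shock-free eternal evolution) presupposes at least Lipschitz flux; the paper only assumes $\varphi\in C(\R,\R^n)$, where characteristics are not defined and even uniqueness of e.s. can fail for $n>1$. Finally, beware that an e.s. satisfying all entropy inequalities with equality need not be rigid in any obvious sense when $\xi\cdot\varphi$ is allowed to be affine in non-resonant directions: non-constant reversible solutions do exist then, and only the spectral localization rules them out.

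The paper takes a completely different route that avoids both obstacles. For a trigonometric-polynomial datum it uses that $M_0$ is a free abelian group of finite rank $m$ to write $u_0(x)=v_0(y(x))$ with $v_0$ a $\Z^m$-periodic function on $\R^m$ and $y$ linear, lifts the equation to a conservation law on $\R^m$ with flux components $\tilde\varphi_j=\lambda_j\cdot\varphi$, and observes that condition (\ref{ND}) for $\xi\in M_0$ becomes exactly the known non-degeneracy condition (\ref{ND1}) for the dual lattice $\Z^m$, so the periodic decay theorem of \cite{PaNHM} applies on the torus $\T^m$. Birkhoff's ergodic theorem for the (ergodic, by $\Q$-independence of the $\lambda_j$) shift action $z\mapsto z+y(x)$ then transfers the $L^1(\T^m)$ decay back to the Besicovitch norm via the identity $\dashint_{\R^n}|u(t,x)-I|\,dx=\int_{\T^m}|v(t,y)-I|\,dy$. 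The general case follows by approximating $u_0$ with Bochner--Fej\'er polynomials having spectrum in $M_0$ and mean $I$, using the $N_1$-contraction of Theorem~\ref{th1} to pass to the limit uniformly in $t$. If you want to salvage your scheme, you would in effect have to prove a Besicovitch-class analogue of the periodic compactness/rigidity results, which is a research-level task, not a routine adaptation.
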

\begin{remark}\label{rem2}
The decay of almost periodic e.s. was firstly studied by H. Frid \cite{Frid} in the class of Stepanov  almost periodic function. This class is natural for the case of smooth flux vector $\varphi(u)$, when an e.s. $u(t,x)$ of (\ref{1}), (\ref{1I}) exhibits the property of finite speed of propagation. The decay of such solutions was established in the stronger Stepanov norm but under rather restrictive assumptions on the dependence of the length of inclusion intervals for $\varepsilon$-almost periods of $u_0$ on the parameter $\varepsilon$.
\end{remark}

Notice that in the case of a periodic function $u_0$ the group $M_0$ coincides with the dual lattice $\L'$ to the lattice $\L$ of periods of $u_0$, and in this case theorem~\ref{th2} reduces to the following result \cite{PaNHM} (~see also the earlier paper \cite{PaAIHP}~):
\begin{theorem}\label{th2a}
Under the condition
\begin{eqnarray}\label{ND1}
\forall\xi\in \L', \xi\not=0 \mbox{ the functions } u\to \xi\cdot\varphi(u) \nonumber\\
\mbox{ are not affine in any vicinity of } I=\int_{\T^n} u_0(x)dx
\end{eqnarray}
an e.s. $u(t,x)\in C([0,+\infty),L^1(\T^n))$ satisfies the decay property
\begin{equation}\label{dec1}
\lim_{t\to+\infty}\int_{\T^n} |u(t,x)-I|dx=0.
\end{equation}
Here $\T^n=\R^n/\L$ is the $n$-dimensional torus, and $dx$ is the normalized Lebesgue measure on $\T^n$.
\end{theorem}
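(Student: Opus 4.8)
The plan is to deduce the decay (\ref{dec1}) from a strong compactness property of the family of solutions produced by the hyperbolic rescaling of $u$, the condition (\ref{ND1}) being exactly what guarantees this compactness. The first step is a monotonicity property. The constant function $v\equiv I$ is itself an e.s. of (\ref{1}), so the $L^1(\T^n)$-contraction underlying Theorem~\ref{th1} applies to the pair $(u,v)$ and shows that $G(t)=\int_{\T^n}|u(t,x)-I|\,dx$ is non-increasing in $t$; by the continuity recorded in Remark~\ref{rem1} it is continuous, hence $G(t)\downarrow m\ge 0$ as $t\to+\infty$, and it remains to prove $m=0$.

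Next I would rescale. For $\lambda>0$ put $u_\lambda(t,x)=u(\lambda t,\lambda x)$. Since equation (\ref{1}) is invariant under $(t,x)\mapsto(\lambda t,\lambda x)$, each $u_\lambda$ is again an e.s., now $\lambda^{-1}\L$-periodic in $x$, with the same mean value $I$ and with initial datum $u_0(\lambda x)$. As $u_0$ is $\L$-periodic with mean $I$, fast oscillation gives $u_0(\lambda\,\cdot)\rightharpoonup I$ weakly-$*$ in $L^\infty(\R^n)$ as $\lambda\to+\infty$. Moreover, for every fixed $t>0$ and every bounded domain $\Omega\subset\R^n$, averaging the $\lambda^{-1}\L$-periodic function $x\mapsto|u_\lambda(t,x)-I|$ over $\Omega$ converges, up to an $O(\lambda^{-1})$ boundary-layer error, to its mean over one period $G(\lambda t)$, so that
\begin{equation}\label{avg}
\frac1{|\Omega|}\int_\Omega|u_\lambda(t,x)-I|\,dx=G(\lambda t)+o(1)\longrightarrow m\quad(\lambda\to+\infty).
\end{equation}

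The crux is the following strong precompactness claim: the bounded family $\{u_\lambda\}_{\lambda>1}\subset L^\infty(\Pi)$ is precompact in $L^1_{loc}(\Pi)$ and each of its limit points is the e.s. with initial datum equal to the weak limit $I$, i.e. the constant $I$. This is exactly where (\ref{ND1}) is used. The functions $u_\lambda-I$ have spectra on the dilated lattices $\lambda\L'$, whose normalized frequencies form the fixed dense subset $\{\xi/|\xi|:\xi\in\L',\ \xi\ne 0\}$ of the unit sphere; by (\ref{ND1}) the maps $\lambda\mapsto\xi\cdot\varphi(\lambda)$ are non-affine near $I$ for all these directions. The localization principle for H-measures (equivalently, velocity averaging adapted to the merely continuous flux) then forces the microlocal defect measure of the sequence to vanish, which yields strong $L^1_{loc}(\Pi)$ convergence along subsequences; a further use of the non-degeneracy rules out an oscillatory initial layer and identifies the initial trace of any limit with the weak limit $I$. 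By the uniqueness contained in Theorem~\ref{th1}, that limit is the constant $I$, so in fact $u_\lambda\to I$ in $L^1_{loc}(\Pi)$ for the whole family.

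Combining this with (\ref{avg}) finishes the argument: for fixed $t>0$ and bounded $\Omega$ the left-hand side of (\ref{avg}) tends to $0$, whence $m=0$, which is precisely (\ref{dec1}). The main obstacle is the strong precompactness step: with a flux that is only continuous one cannot invoke classical velocity averaging, and the whole weight of the proof falls on showing that non-degeneracy along the dense set of resonant directions $\xi\in\L'$ suffices to suppress oscillations both in the interior and down to $t=0$; this is the analytic core and rests on the H-measure localization machinery rather than on any explicit estimate. The sharpness of (\ref{ND1}), not asserted in Theorem~\ref{th2a}, would be seen by letting a one-dimensional traveling profile ride on an affine piece of $\xi\cdot\varphi$ when the condition fails.
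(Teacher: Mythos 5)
The paper does not actually prove Theorem~\ref{th2a}: it is quoted as a known result from \cite{PaNHM} (see also \cite{PaAIHP}, and \cite{Daferm} for $C^2$ flux) and is then used as the key external ingredient in the proof of Theorem~\ref{th2}. Your overall strategy --- monotonicity of $t\mapsto\int_{\T^n}|u(t,x)-I|\,dx$ via the $L^1$-contraction, the hyperbolic rescaling $u_\lambda(t,x)=u(\lambda t,\lambda x)$, strong precompactness of $\{u_\lambda\}$, and identification of every limit point with the constant $I$ --- is indeed the strategy of those references, so the outer skeleton of your argument is sound and the reduction of (\ref{dec1}) to the precompactness claim is correct.

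However, the proposal has a genuine gap exactly at its analytic core. The strong precompactness of $\{u_\lambda\}$ and the identification of the limit are asserted rather than proved, and the one mechanism you indicate is not the right one: you argue that the normalized frequencies $\xi/|\xi|$, $\xi\in\L'\setminus\{0\}$, form a dense subset of the unit sphere on which (\ref{ND1}) gives non-degeneracy, and that the localization principle then kills the H-measure. But non-degeneracy on a dense set of directions does not suffice: the localization principle annihilates the H-measure only at directions where the flux is non-degenerate, and nothing in that statement prevents the H-measure from charging a direction outside your countable dense set (non-degeneracy is not stable under limits of directions, and (\ref{ND1}) says nothing about $\xi\notin\L'$). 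What actually closes the argument in \cite{PaAIHP,PaNHM} is that the discreteness of the spectrum, $Sp(u_\lambda(t,\cdot))\subset\lambda\L'$, confines the \emph{support} of the H-measure to the set of lattice directions, after which (\ref{ND1}) applies pointwise on that support. That support-confinement step, the formulation of a localization principle valid for a merely continuous flux $\varphi$, and the exclusion of an initial layer so that the limit has trace $I$ at $t=0$ (needed before invoking the uniqueness of Theorem~\ref{th1}) are precisely the nontrivial content of the theorem, and none of them is supplied. As it stands the proposal is a correct high-level plan, not a proof.
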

Remark that in the case $\varphi(u)\in C^2(\R,\R^n)$ the assertion of theorem~\ref{th2a} was established in
\cite{Daferm}.
Now we consider the case of one space variable $n=1$ when (\ref{1}) has the form
\begin{equation}\label{1-1}
u_t+\varphi(u)_x=0,
\end{equation}
where $\varphi(u)\in C(\R)$. As above, we assume that $u_0\in\B^1(\R)\cap L^\infty(\R)$ and that $M_0$ is the additive subgroup of $\R$ generated by $Sp(u_0)$. For an almost periodic function
$v(x)\in \B^1(\R)$ we denote by $S(v)$ the minimal segment $[a,b]$ containing essential values of $v(x)$. This segment can be defined by the relations
\begin{eqnarray*}
b=\min \{ \ k\in\R \ | \ (v-k)^+=\max(v-k,0)=0 \mbox{ in } \B^1(\R) \ \}, \\
a=\max\{ \ k\in\R \ | \ (k-v)^+=0 \mbox{ in } \B^1(\R) \ \}.
\end{eqnarray*}
As is easy to verify, the above minimal and maximal values exist and $a\le b$.

Our second result is the following unconditional asymptotic property of convergence of an e.s. $u(t,x)$
to a traveling wave:

\begin{theorem}\label{th3}
There is a constant $c\in\R$ (speed) and a function $v(y)\in\B^1(\R)\cap L^\infty(\R)$ (profile) such that
\begin{equation}\label{trav}
\lim_{t\to+\infty} (u(t,x)-v(x-ct))=0 \ \mbox{ in } \B^1(\R).
\end{equation}
Moreover, $Sp(v)\subset M_0$, $\bar v=I=\overline{u_0}$, and $\varphi(u)-cu=\const$ on the segment $S(v)$.
\end{theorem}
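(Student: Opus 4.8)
The plan is to pass to a suitable moving frame and reduce (\ref{trav}) to convergence towards a stationary profile whose essential range lies on the affine part of the flux. First I would fix the speed $c$ from the local structure of $\varphi$ at $I=\overline{u_0}$. If $\varphi$ is not affine in any neighbourhood of $I$, then, since $n=1$, hypothesis (\ref{ND}) holds for $M_0$ (for a scalar $\xi\neq0$ the map $u\mapsto\xi\varphi(u)$ is affine precisely when $\varphi$ is), so Theorem~\ref{th2} already yields $u(t,\cdot)\to I$ in $\B^1(\R)$; this is (\ref{trav}) with $v\equiv I$, $S(v)=\{I\}$, and arbitrary $c$. Otherwise the open set of points near which $\varphi$ is affine is nonempty, and I let $[\alpha,\beta]$ be the closure of its connected component containing $I$; by continuity $\varphi(u)=cu+d$ on $[\alpha,\beta]$ for suitable $c,d$, while $\varphi$ is non-affine on every one-sided neighbourhood of $\alpha$ and of $\beta$. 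I take this $c$ as the speed.

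Next I would move to the frame $y=x-ct$. Writing $\tilde u(t,y)=u(t,y+ct)$, the Galilean invariance of (\ref{1-1}) shows that $\tilde u$ is the e.s. with shifted flux $\psi(u)=\varphi(u)-cu$ and the same data $u_0$; here $\psi\equiv d$ on $[\alpha,\beta]$ and $\psi$ is non-affine just outside $[\alpha,\beta]$. Translation preserves the mean, the spectrum and the $\B^1$ norm, so $Sp(\tilde u(t,\cdot))\subseteq M_0$ and $\overline{\tilde u(t,\cdot)}=I$. The assertion (\ref{trav}) is then equivalent to $\tilde u(t,\cdot)\to v$ in $\B^1(\R)$ for some stationary e.s. $v$, and for such $v$ stationarity means $\psi(v)=\const$ in $\D'(\R)$; the claimed identity $\varphi-cu=\const$ on $S(v)$ is exactly the flatness of $\psi$ on $S(v)$, which holds as soon as $S(v)\subseteq[\alpha,\beta]$.

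The core is therefore to show that the essential range of $\tilde u(t,\cdot)$ collapses into $[\alpha,\beta]$, i.e. $(\tilde u(t,\cdot)-\beta)^+\to0$ and $(\alpha-\tilde u(t,\cdot))^+\to0$ in $\B^1(\R)$, and then to extract the profile. Since $\psi$ is non-affine on either side of $[\alpha,\beta]$ while it is flat inside (where the equation is pure transport and produces no mixing), the non-degeneracy mechanism underlying Theorem~\ref{th2}, localized to the values of $\tilde u$ lying outside $[\alpha,\beta]$, should force the two one-sided excesses to decay. Granting this, any $\B^1$-limit $v$ of a sequence $\tilde u(t_k,\cdot)$ satisfies $(v-\beta)^+=0=(\alpha-v)^+$, that is $S(v)\subseteq[\alpha,\beta]$; on this segment $\psi$ is flat, hence the e.s. issued from $v$ is stationary and $v$ is an equilibrium. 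Using precompactness of the orbit $\{\tilde u(t,\cdot):t\ge0\}$ in $\B^1(\R)$ (bounded functions with spectra in the fixed countable group $M_0$) to produce such a limit $v$ along some $t_k\to+\infty$, and invoking the contraction of Theorem~\ref{th1}, by which $t\mapsto N_1(\tilde u(t,\cdot)-v)$ is non-increasing, together with $N_1(\tilde u(t_k,\cdot)-v)\to0$, I conclude $N_1(\tilde u(t,\cdot)-v)\to0$, i.e. $\tilde u(t,\cdot)\to v$ in $\B^1(\R)$. Passing to the limit yields $Sp(v)\subseteq M_0$, $\bar v=I$, and $\varphi(u)-cu\equiv d$ on $S(v)$, as required.

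I expect the main obstacle to be precisely this collapse of the range into $[\alpha,\beta]$ together with the upgrade to genuine convergence. Unlike the fully non-degenerate case, Theorem~\ref{th2} cannot be quoted directly, so its averaging/$H$-measure non-degeneracy argument must be run only on the part of the solution taking values outside the flat segment, while verifying that the transport inside $[\alpha,\beta]$ does not interfere. Establishing the required precompactness in $\B^1(\R)$ (equi-almost-periodicity of the orbit) and thereby excluding temporal oscillation of the profile are the other delicate points.
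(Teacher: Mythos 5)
Your skeleton matches the paper's at the top level: dispose of the case where $\varphi$ is non-affine near $I$ via Theorem~\ref{th2}, otherwise read the speed $c$ off the maximal interval $(a,b)\ni I$ on which $\varphi(u)-cu=\const$, show that the range of $u(t,\cdot)$ collapses into $[a,b]$, and extract a profile transported with speed $c$. But the two steps you yourself flag as the main obstacles are exactly the ones carrying the proof, and neither goes through as written. For the collapse $(u(t,\cdot)-b)^+\to0$, $(a-u(t,\cdot))^+\to0$, you propose to ``localize'' the non-degeneracy mechanism of Theorem~\ref{th2} to the values of $u$ outside $[a,b]$; no such localized version is available for merely continuous flux and you supply no mechanism. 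The paper's device is different and elementary: assuming $b<+\infty$, solve (\ref{1-1}) with the vertically shifted datum $u_0+b-I\ge u_0$. Its mean equals $b$, and by maximality of $(a,b)$ the flux is non-affine in every neighbourhood of $b$, so Theorem~\ref{th2} applies verbatim to this auxiliary solution $u_+$ and gives $u_+(t,\cdot)\to b$ in $\B^1(\R)$; the comparison principle $u\le u_+$ then yields $(u(t,\cdot)-b)^+\to 0$, and symmetrically at $a$. This comparison with shifted data is the key idea missing from your proposal.

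The second gap is the precompactness of the orbit in $\B^1(\R)$, which you justify by boundedness plus spectrum contained in the fixed countable group $M_0$. That is not a valid reason: the functions $\sin(2\pi kx)$, $k\in\N$, are uniformly bounded with spectra in $\Z$ yet pairwise separated in the $N_1$ seminorm, so no convergent subsequence exists. The paper avoids compactness altogether. Setting $w(t,x)=\min(b,\max(a,u(t,x)))$ and choosing $t_k\to+\infty$ with $N_1(u(t_k,\cdot)-w(t_k,\cdot))\le 2^{-k}$, one observes that the e.s. issued from $w(t_k,\cdot)$ at time $t_k$ is the exact traveling wave $w(t_k,x-c(t-t_k))$, because its values lie in $[a,b]$ where the flux is affine with slope $c$; the contraction of Theorem~\ref{th1} (applied with initial time $t_k$, and using that the cut-off is $1$-Lipschitz) then shows that $w(t_k,x+ct_k)$ is a Cauchy sequence in $\B^1(\R)$, and its limit is the profile $v$, with $S(v)\subset[a,b]$, $Sp(v)\subset M_0$ and $\bar v=I$ inherited in the limit. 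Your closing argument --- monotonicity of $t\mapsto N_1(u(t,\cdot)-v(\cdot-ct))$ plus convergence to $0$ along a subsequence --- is sound once such a $v$ exists, but producing $v$ is precisely the point, and your route to it rests on a false compactness claim.
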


We remark, in addition to theorem~\ref{th3}, that the profile $v(y)$ of the traveling wave and, if $v\not\equiv\const$, its speed $c$ are uniquely defined. Indeed, if (\ref{trav}) holds with $v=v_1,v_2$, $c=c_1,c_2$, respectively, then
$v_1(x-c_1t)-v_2(x-c_2t)\to 0$ in $\B^1(\R)$ as $t\to+\infty$, which implies the relation
\begin{equation}\label{6}
\lim_{t\to+\infty} (v_1(y)-v_2(y+(c_1-c_2)t))=0 \ \mbox{ in } \B^1(\R).
\end{equation}
By the known property of almost periodic functions (~see, for example, \cite{Bes}~), there exists a sequence $t_r\to+\infty$ such that $v_2(y+(c_1-c_2)t_r)\mathop{\to}\limits_{r\to\infty} v_2(y)$ in $\B^1(\R)$ (this is evident if $c_1=c_2$). On the other hand, in view of (\ref{6})  $v_2(y+(c_1-c_2)t_r)\mathop{\to}\limits_{r\to\infty} v_1(y)$ in $\B^1(\R)$ and hence
$v_1=v_2$ in $\B^1(\R)$. Further, if $\Delta c=c_1-c_2\not=0$, then it follows from (\ref{6}) in the limit as  $t=t_r+h/\Delta c\to+\infty$ that $v_2(y)=v_2(y+h)$ in $\B^1(\R)$ for each $h\in\R$. Therefore,
$$
v_2(y)=\dashint_{\R}v_2(y+h)dh=\dashint_{\R}v_2(h)dh=\overline{v_2}=\const.
$$
Thus, for the nonconstant profile $v=v_2$ the speed $c_1=c_2=c$ is uniquely determined. We also remark that $\|v\|_\infty\le\|u_0\|_\infty$ because by the maximum principle
$|u(t,x)|\le\|u_0\|_\infty$ a.e. in $\Pi$.

\medskip
Theorem~\ref{th3} defines the nonlinear operator $T$ on $\B^1(\R)\cap L^\infty(\R)$, which associates an initial function $u_0$ with the profile $v(y)=T(u_0)(y)$ of the limit traveling wave for the corresponding e.s. of problem (\ref{1-1}), (\ref{1I}). In theorem~\ref{th4} below we establish that $T$ does not increase the distance in
$\B^1(\R)$.

\begin{remark}\label{rem3}
In the case $n=1$ the statement of theorem~\ref{th2} follows from theorem~\ref{th3}. Indeed, under the assumptions of theorem~\ref{th2}, $v(y)=I$ in  $\B^1(\R)$. Otherwise, $a<I<b$, where $[a,b]=S(v)$ and, by theorem~\ref{th3},
$\varphi(u)=cu+\const$ in the vicinity $(a,b)$ of $I$. But the latter contradicts to assumption (\ref{ND}) of theorem~\ref{th2}.
\end{remark}

Note that in the periodic case theorems~\ref{th3},~\ref{th4} were proved in \cite{PaMZM}.

\section{Proof of theorem~\ref{th2}}\label{sec2}

We assume firstly that the initial function is a trigonometric polynomial $\displaystyle u_0(x)=\sum_{\lambda\in\Lambda}a_\lambda e^{2\pi i\lambda\cdot x}$. Here $\Lambda=Sp(u_0)\subset\R^n$ is a finite set.  The minimal additive subgroup $M_0\doteq M(u_0)$ of $\R^n$ containing $\Lambda$ is a finite generated torsion-free abelian group and therefore it is a free abelian group of
finite rank (see \cite{Lang}). Therefore, there is a basis $\lambda_j\in M_0$, $j=1,\ldots,m$, so that every element $\lambda\in M_0$ can be uniquely represented as $\displaystyle\lambda=\lambda(\bar k)=\sum_{j=1}^m k_j\lambda_j$, $\bar k=(k_1,\ldots,k_m)\in\Z^m$. In particular, the vectors $\lambda_j$, $j=1,\ldots,m$, are linearly independent over the field of rational numbers $\Q$. We introduce the finite set $J=\{ \ \bar k\in\Z^m \ | \ \lambda(\bar k)\in\Lambda \ \}$ and represent the initial function as
$$u_0(x)=\sum_{\bar k\in J} a_{\bar k}e^{2\pi i\sum_{j=1}^m k_j\lambda_j\cdot x}, \quad a_{\bar k}\doteq a_{\lambda(\bar k)}.$$
By this representation $u_0(x)=v_0(y(x))$, where
$$
v_0(y)=\sum_{\bar k\in J} a_{\bar k}e^{2\pi i\bar k\cdot y}
$$
is a periodic function on $\R^m$ with the standard lattice of periods $\Z^m$ while $y(x)$ is a linear map from  $\R^n$ to $\R^m$ defined by the equalities $\displaystyle y_j=\lambda_j\cdot x=\sum_{i=1}^n\lambda_{ji}x_i$,  $\lambda_{ji}$, $i=1,\ldots,n$, being coordinates of the vectors $\lambda_j$, $j=1,\ldots,m$. We consider the conservation law
\begin{equation}\label{1r}
v_t+\div_y \tilde \varphi(v)=0, \quad v=v(t,y), \ t>0, \ y\in\R^m,
\end{equation}
$\tilde\varphi(v)=(\tilde\varphi_1(v),\ldots,\tilde\varphi_m(v))$, where $$\tilde\varphi_j(v)=\lambda_j\cdot\varphi(u)=\sum_{i=1}^n\lambda_{ji}\varphi_i(v)\in C(\R), \quad j=1,\ldots,m.$$
As was shown in \cite{PaMax1,PaIzv}, there exists a unique e.s. $v(t,y)\in L^\infty(\R_+\times\R^m)$ of the Cauchy problem for equation (\ref{1r}) with initial function $v_0(y)$ and this e.s. is $y$-periodic, i.e.  $v(t,y+e)=v(t,y)$ a.e. in $\R_+\times\R^m$ for all $e\in\Z^m$. Besides, in view of \cite[Corollary~7.1]{PaJHDE}, we may suppose that $v(t,\cdot)\in C([0,+\infty), L^1(\T^m))$, where $\T^m=\R^m/\Z^m$ is an $m$-dimensional torus (~which may be identified with the fundamental cube $[0,1)^m$~).
Formally, for $u(t,x)=v(t,y(x))$
\begin{eqnarray*}
u_t+\div_x\varphi(u)=v_t+\sum_{i=1}^n\sum_{j=1}^m (\varphi_i(v))_{y_j}\frac{\partial y_j(x)}{\partial x_i}=\\
v_t+\sum_{i=1}^n\sum_{j=1}^m (\varphi_i(v))_{y_j}\lambda_{ji}=v_t+\sum_{j=1}^m (\tilde\varphi_j(v))_{y_j}=0.
\end{eqnarray*}
However, these reasons are correct only for classical solutions. In the general case $v(t,y)\in L^\infty(\R_+\times\R^m)$ the range of $y(x)$ may be a proper subspace of $\R^m$
(for example, this is always true if $m>n$), and the composition $v(t,y(x))$ is not even defined. The situation is saved by introduction of additional variables $z\in\R^m$. Namely, the linear change $(z,x)\to (z+y(x),x)$ is not degenerated, i.e. it is a linear automorphism of $\R^m\times\R^n$. Since $v(t,y)$ is an e.s. of equation (\ref{1r}) considered in the extended half-space $t>0$, $(y,x)\in\R^{m+n}$, then the function $u(t,z,x)=v(t,z+y(x))$ satisfies the relations
\begin{eqnarray*}
|u-k|_t+\div_x[\sign(u-k)(\varphi(u)-\varphi(k))]=\\
|v-k|_t+\sum_{i=1}^n\sum_{j=1}^m [\sign(v-k)(\varphi_i(v)-\varphi_i(k))]_{y_j}\frac{\partial y_j(x)}{\partial x_i}=\\
|v-k|_t+\sum_{j=1}^m\sum_{i=1}^n [\sign(v-k)(\varphi_i(v)-\varphi_i(k))]_{y_j}\lambda_{ji}=\\
|v-k|_t+\sum_{j=1}^m [\sign(v-k)(\tilde\varphi_j(u)-\tilde\varphi_j(k))]_{y_j}\le 0 \ \mbox{ in } \D'(\R_+\times\R^{m+n})
\end{eqnarray*}
for all $k\in\R$. Evidently, the initial condition
$$
\lim_{t\to 0+} u(t,z,x)=u_0(z,x)\doteq v_0(z+y(x)) \ \mbox{ in } L^1_{loc}(\R^{m+n})
$$
is also satisfied, therefore $u(t,z,x)$ is an e.s. of (\ref{1}), (\ref{1I}) in the extended domain $\R_+\times\R^{m+n}$.
Since equation (\ref{1}) does not contain the auxiliary variables $z\in\R^m$, then (~cf. \cite[Theorem~2.1]{PaJHDE1}~) for all $z\in E\subset\R^m$, where $E$ is a set of full measure, the function $v(t,z+y(x))$ is an e.s. of (\ref{1}), (\ref{1I}) with initial data $v_0(z+y(x))\in\B^1(\R^n)$. Therefore, $v(t,z+y(x))=u^z(t,x)$ a.e. in $\Pi$, where, in accordance with \cite[Theorem~1.6]{PaJHDE1},
$u^z(t,x)\in C([0,+\infty),\B^1(\R^n))$ is a unique almost periodic e.s. of (\ref{1}), (\ref{1I}). Therefore, we may find a countable dense set $S\subset\R_+$ and a subset $E_1\subset E$ of full measure such that $u^z(t,x)=v(t,z+y(x))$ in $\B^1(\R)$ for all $t\in S$, $z\in E_1$.

Further, as follows from independence of the vectors $\lambda_j$, $j=1,\ldots,m$, over $\Q$, the action of the additive group $\R^n$ on the torus $\T^m$ defined by the shift transformations $T_xz=z+y(x)$, $x\in\R^n$, is ergodic, see \cite{PaJHDE1} for details. By the variant of Birkhoff individual ergodic theorem \cite[Chapter VIII]{Danf} for every $w(y)\in L^1(\T^m)$ for a.e. $z\in\T^m$ there exists the mean value
\begin{equation}\label{erg}
\dashint_{\R^n}w(z+y(x))dx=\int_{\T^m} w(y)dy.
\end{equation}

In view of (\ref{erg}), there exists a set $E_2\subset E_1$ of full measure such that for $z\in E_2$ and all $t\in S$
$$
\dashint_{\R^n}|u^z(t,x)-I|dx=\dashint_{\R^n}|v(t,z+y(x))-I|dx=\int_{\T^m} |v(t,y)-I|dy.
$$
Since $u^z(t,x)\in C([0,+\infty),\B^1(\R^n))$, $v(t,\cdot)\in C([0,+\infty),L^1(\T^m))$, while the set $S$ is dense in $[0,+\infty)$, we find that property
\begin{equation}\label{erg1}
\dashint_{\R^n}|u^z(t,x)-I|dx=\int_{\T^m} |v(t,y)-I|dy
\end{equation}
remains valid for all $t\ge 0$.
Observe that $v_0(z+y(x))\to v_0(y(x))=u_0(x)$ as $z\to 0$ in $\B^1(\R^n)$ (and even in $AP(\R^n)$).
Hence, by theorem~\ref{th1} in the limit as $E_2\ni z\to 0$ \ $u^z(t,x)\to u(t,x)$ in $C([0,+\infty),\B^1(\R^n))$, where $u(t,x)$ is the e.s. of original problem (\ref{1}), (\ref{1I}).  Therefore, relation (\ref{erg1}) in the limit as  $z\to 0$ implies the equality
\begin{equation}\label{erg2}
\dashint_{\R^n}|u(t,x)-I|dx=\int_{\T^m} |v(t,y)-I|dy.
\end{equation}
Further, for every $\bar k=(k_1,\ldots,k_m)\in\Z^m$
$$
\bar k\cdot\tilde\varphi(u)=\sum_{j=1}^m\sum_{i=1}^n k_j\lambda_{ji}\varphi_i(u)=
\lambda(\bar k)\cdot\varphi(u),
$$
where $\displaystyle\lambda(\bar k)=\sum_{j=1}^m k_j\lambda_j\in M_0$. By condition (\ref{ND}) the functions  $u\to\bar k\cdot\tilde\varphi(u)$ are not affine in any vicinity of $I=\overline{u_0}=\int_{\T^m} v_0(y)dy$. We see that non-degeneracy requirement (\ref{ND1}) is satisfied, and by \cite[Theorem~1.3]{PaNHM}
$$
\lim_{t\to+\infty}\int_{\T^m}|v(t,y)-I|dy=0.
$$
Now it follows from (\ref{erg2}) that
$$
\lim_{t\to+\infty}\dashint_{\R^n}|u(t,x)-I|dx=0,
$$
i.e. (\ref{dec}) holds.

In the general case $u_0\in\B^1(\R^n)\cap L^\infty(\R^n)$ we choose a sequence $u_{0m}$, $m\in\N$, of trigonometric polynomials converging to $u_0$ in $\B^1(\R^n)$ and such that $Sp(u_{0m})\subset M_0$, $\overline{u_{0m}}=I$ (for instance, we may choose the Bochner-Fej\'er trigonometric polynomials, see \cite{Bes}~).  Let $u_m(t,x)$ be the corresponding sequence of e.s. of (\ref{1}), (\ref{1I}) with initial data $u_{0m}(x)$, $m\in\N$. By theorem~\ref{th1} and remark~\ref{rem1} this sequence converges as $m\to\infty$ to the e.s. $u(t,x)$ of the original problem in $C([0,+\infty),\B^1(\R^n))$. We has already established that under condition (\ref{ND}) e.s. $u_m(t,x)$ satisfy the decay property
$$
\lim_{t\to+\infty} u_m(t,\cdot)=I \ \mbox{ in } \B^1(\R^n).
$$
Passing to the limit as $m\to\infty$ in this relation and taking into account the uniform convergence $u_m(t,\cdot)\mathop{\to}\limits_{m\to\infty} u(t,\cdot)$ in $\B^1(\R^n)$, we obtain (\ref{dec}).

In conclusion, we demonstrate that condition (\ref{ND}) is precise. Indeed, if this condition is violated, then there is a nonzero vector $\xi\in M_0$ such that $\xi\cdot\varphi(u)=\tau u+c$ on some segment $[I-\delta,I+\delta]$, where $\tau,c,\delta\in\R$ and $\delta>0$. Obviously, the function
$$
u(t,x)=I+\delta\sin(2\pi(\xi\cdot x-\tau t))
$$
is an e.s. of (\ref{1}), (\ref{1I}) with the periodic initial function
$u_0(x)=I+\delta\sin(2\pi(\xi\cdot x))$. We see that $\overline{u_0}=I$, $Sp(u_0)\subset \{-\xi,0,\xi\}\subset M_0$ but
the e.s. $u(t,x)$ does not converge to a constant in $\B^1(\R^n)$ as $t\to+\infty$.

The proof of theorem~\ref{th2} is complete.

\section{Proof of theorem~\ref{th3}}\label{sec3}

If the flux function $\varphi(u)$ is not affine in any vicinity of $I$, then by theorem~\ref{th2} the function   $v(y)\equiv I$, and the segment $S(v)=[I,I]=\{I\}$. Otherwise, suppose that the function $\varphi(u)$ is affine in a certain maximal interval $(a,b)$, where $-\infty\le a<I<b\le+\infty$: $\varphi(u)-cu=\const$ in $(a,b)$.

Assuming that $b<+\infty$, we define $u_+=u_+(t,x)$ as the e.s. of (\ref{1-1}), (\ref{1I}) with initial function $u_0(x)+b-I>u_0$. By the comparison principle \cite{KrPa1,KrPa2,PaMax1,PaIzv} $u_+\ge u$ a.e. in $\Pi$.
We note that $\dashint_\R (u_0(x)+b-I) dx=b$ while $\varphi(u)$ is not affine in any vicinity of $b$ (otherwise, $\varphi(u)$ is affine on a larger interval $(a,b')$, $b'>b$, which contradicts the maximality of $(a,b)$~). By theorem~\ref{th2} \  $u_+(t,\cdot)\to b$ in $\B^1(\R)$ as $t\to+\infty$, and it follows from the inequality $u\le u_+$ that $(u(t,\cdot)-b)^+\to 0$ as $t\to+\infty$ in $\B^1(\R)$. Similarly, if $a>-\infty$, then $u\ge u_-$, where $u_-=u_-(t,x)$ is an e.s. of (\ref{1-1}), (\ref{1I}) with initial function $u_0(x)+a-I<u_0$. By theorem~\ref{th2} again the function $u_-(t,\cdot)\to a$ as $t\to+\infty$ in $\B^1(\R)$ because $\dashint_\R (u_0(x)+a-I) dx=a$ while the function $\varphi(u)$ is not affine in any vicinity of $a$. Therefore, $(a-u(t,\cdot))^+\mathop{\to}\limits_{t\to+\infty} 0$ in $\B^1(\R)$. The obtained limit relations can be represented in the form
\begin{equation}\label{12a}
u(t,\cdot)-s_{a,b}(u(t,\cdot))\mathop{\to}\limits_{t\to+\infty} 0 \ \mbox{ in } \B^1(\R),
\end{equation}
where $s_{a,b}(u)=\min(b,\max(a,u))$ is the cut-off function at the levels $a,b$ (it is possible that $a=-\infty$ or $b=+\infty$).

We set $w(t,x)=s_{a,b}(u(t,x))$ and choose a strictly increasing sequence $t_k>0$ such that $t_k\to+\infty$ and $N_1(u(t_k,\cdot)-w(t_k,\cdot))\le 2^{-k}$. Since $a\le w(t,x)\le b$ while $\varphi(u)=cu+\const$ on $(a,b)$, then the e.s. of (\ref{1-1}) with initial data $w(t_k,x)$ at $t=t_k$ has the form $u=w(t_k,x-c(t-t_k))$. By theorem~\ref{th1} (with the initial time $t_k$) for all $t>t_k$
\begin{eqnarray*}
\dashint_\R |w(t,x)-w(t_k,x-c(t-t_k))|dx=\dashint_\R |s_{a,b}(u(t,x))-s_{a,b}(w(t_k,x-c(t-t_k)))|dx\\ \le
\dashint_\R |u(t,x)-w(t_k,x-c(t-t_k))|dx\le \dashint_\R |u(t_k,x)-w(t_k,x)|dx\le 2^{-k}.
\end{eqnarray*}
Substituting $t=t_l$, where $l>k$, into this inequality, we obtain
$$
\dashint_\R |w(t_l,x+ct_l)-w(t_k,x+ct_k)|dx=\dashint_\R |w(t,x)-w(t_k,x-c(t_l-t_k))]dx\le 2^{-k}.
$$
Thus, $w(t_k,x+ct_k)$, $k\in\N$, is a Cauchy sequence in $\B^1(\R)$. Therefore, this sequence converges as $k\to\infty$ to some function $v(x)\in \B^1(\R)\cap L^\infty(\R)$ in $\B^1(\R)$. It is clear that the segment $S(v)\subset [a,b]$ and therefore $\varphi(u)-cu=\const$ on $S(v)$. Since
$Sp(w(t_k,x+ct_k))=Sp(w(t_k,\cdot))\subset Sp(u(t_k,\cdot))\subset M_0$, the same inclusion holds for the limit function: $Sp(v)\subset M_0$. Finally, as follows from theorem~\ref{th1}, for $t>t_k$
\begin{eqnarray*}
\dashint_\R |u(t,x)-v(x-ct)|dx\le \\ \dashint_\R |u(t_k,x)-w(t_k,x)|dx+\dashint_\R |w(t_k,x)-v(x-ct_k)|dx=\\
\dashint_\R |u(t_k,x)-w(t_k,x)|dx+\dashint_\R |w(t_k,x+ct_k)-v(x)|dx\le \\ 2^{-k}+N_1(w(t_k,\cdot+ct_k)-v)\to 0
\end{eqnarray*}
as $t\to +\infty$ (then also $k=\max \{ \ l \ | \ t>t_l \ \}\to +\infty$). We see that relation (\ref{trav}) is satisfied.
To complete the proof of theorem~\ref{th3} it only remains to notice that
$$
 \forall t>0 \quad \overline{u(t,\cdot)}=\dashint_{\R} u(t,x)dx=I, \ \overline{v}=\dashint_\R v(x-ct)dx
$$
and (\ref{trav}) implies that $\overline{v}=I$.

\medskip
In conclusion we show that the operator $u_0\to v=T(u_0)$, defined in the Introduction, does not increase the distance in $\B^1(\R)$.

\begin{theorem}\label{th4}
Let $u_{01}(x),u_{02}(x)\in \B^1(\R)\cap L^\infty(\R)$ and $v_1=T(u_{01})(x)$, $v_2=T(u_{02})(x)$. Then
\begin{equation}\label{14}
\dashint_\R |v_1(x)-v_2(x)|dx\le \dashint_\R |u_{01}(x)-u_{02}(x)|dx.
\end{equation}
\end{theorem}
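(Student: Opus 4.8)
The plan is to derive (\ref{14}) from the $\B^1$-contraction of Theorem~\ref{th1} together with the traveling-wave convergence of Theorem~\ref{th3}, removing the time-dependent drifts by the same almost-periodicity device that was used to prove uniqueness of the profile and speed in the discussion following Theorem~\ref{th3}.

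Let $u_1(t,x),u_2(t,x)$ be the e.s.\ of (\ref{1-1}), (\ref{1I}) with initial data $u_{01},u_{02}$, and let $(c_1,v_1)$, $(c_2,v_2)$ be the corresponding speeds and profiles, so that $u_i(t,\cdot)-v_i(\cdot-c_it)\to 0$ in $\B^1(\R)$ as $t\to+\infty$ by (\ref{trav}). First I would apply Theorem~\ref{th1} (valid for all $t>0$ by Remark~\ref{rem1}) to get $N_1(u_1(t,\cdot)-u_2(t,\cdot))\le N_1(u_{01}-u_{02})$. Writing the difference $u_1(t,\cdot)-u_2(t,\cdot)$ as $[v_1(\cdot-c_1t)-v_2(\cdot-c_2t)]+[u_1(t,\cdot)-v_1(\cdot-c_1t)]-[u_2(t,\cdot)-v_2(\cdot-c_2t)]$ and using that $N_1$ is a norm (hence the reverse triangle inequality), the last two brackets tend to $0$, so
$$
N_1\bigl(v_1(\cdot-c_1t)-v_2(\cdot-c_2t)\bigr)\le N_1(u_{01}-u_{02})+o(1),\qquad t\to+\infty.
$$
By translation invariance of $N_1$ (shift by $c_1t$) this reads
$$
N_1\bigl(v_1(\cdot)-v_2(\cdot+(c_1-c_2)t)\bigr)\le N_1(u_{01}-u_{02})+o(1).
$$

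The crux is to pass to the limit and dispose of the residual drift $v_2(\cdot+(c_1-c_2)t)$ when $c_1\ne c_2$. Exactly as in the argument after Theorem~\ref{th3}, the Besicovitch almost periodic function $v_2$ admits a sequence $t_r\to+\infty$ with $v_2(\cdot+(c_1-c_2)t_r)\to v_2$ in $\B^1(\R)$; here $(c_1-c_2)t_r$ is selected among the relatively dense $\varepsilon$-almost periods of $v_2$ of growing magnitude and appropriate sign (this step is vacuous when $c_1=c_2$, and also when $v_2\equiv\const$). Evaluating the last inequality along $t=t_r$ and invoking continuity of the norm $N_1$ on $\B^1(\R)$, so that $N_1\bigl(v_1-v_2(\cdot+(c_1-c_2)t_r)\bigr)\to N_1(v_1-v_2)$, yields (\ref{14}). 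The only genuinely delicate point is this elimination of the drift when the two limiting speeds differ; everything else is bookkeeping with the contraction estimate and the triangle inequality, and no new idea is needed beyond the almost periodicity of $v_2$ already exploited above.
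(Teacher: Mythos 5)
Your proposal is correct and follows essentially the same route as the paper: both rest on the contraction estimate of Theorem~\ref{th1}, the traveling-wave convergence of Theorem~\ref{th3}, translation invariance of $N_1$, and the choice of a sequence $t_k\to+\infty$ along which $(c_1-c_2)t_k$ is an approximate period of $v_2$, exactly as in the uniqueness discussion after Theorem~\ref{th3}. The only difference is cosmetic: you decompose $u_1(t,\cdot)-u_2(t,\cdot)$ and pass the error to the right-hand side, while the paper starts from $\dashint_\R|v_1-v_2|\,dx$ and chains the same triangle inequalities in the opposite order.
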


\begin{proof}
Let $u_1(t,x), u_2(t,x)\in C([0,+\infty),\B^1(\R))\cap L^\infty(\Pi)$ be e.s. of (\ref{1-1}), (\ref{1I}) with initial data $u_{01}$, $u_{02}$, respectively. By theorem~\ref{th3}
$$
\delta(t)=\dashint_\R|u_1(t,x)-v_1(x-c_1t)|dx+\dashint_\R|u_2(t,x)-v_2(x-c_2t)|dx\mathop{\to}_{t\to+\infty} 0,
$$
where $c_1,c_2$ are constants. We can choose a sequence $t_k>0$ such that $t_k\to+\infty$ as $k\to\infty$, and $N_1(v_2(x+(c_1-c_2)t_k)-v_2(x))\le 1/k$. Then, with property (\ref{m-contr}) taken into account,
\begin{eqnarray*}
\dashint_\R|v_1(x)-v_2(x)|dx=\dashint_\R|v_1(x-c_1t_k)-v_2(x-c_1t_k)|dx\le \\ \dashint_\R|v_1(x-c_1t_k)-v_2(x-c_2t_k)|dx+\dashint_\R |v_2(x-c_2t_k)-v_2(x-c_1t_k)|dx=\\
\dashint_\R|v_1(x-c_1t_k)-v_2(x-c_2t_k)|dx+\dashint_\R |v_2(x+(c_1-c_2)t_k)-v_2(x)|dx\le \\
\dashint_\R |u_1(t_k,x)-u_2(t_k,x)|dx+ \delta(t_k)+1/k\le \dashint_\R |u_{01}(x)-u_{02}(x)|dx+\delta(t_k)+1/k.
\end{eqnarray*}
In the limit as $k\to\infty$ this inequality implies (\ref{14}).
\end{proof}

\begin{remark}\label{rem4}
In view of theorem~\ref{th1} the map $F$, which associates an initial data $u_0\in\B^1(\R^n)\cap L^\infty(\R^n)$ with the e.s. $u(t,x)\in C([0,+\infty),\B^1(\R^n))$ of problem (\ref{1}), (\ref{1I}), is a uniformly continuous map from $\B^1(\R^n)$ into $C([0,+\infty),\B^1(\R^n))$. Therefore, it admits the unique continuous extension on the whole space $\B^1(\R^n)$. By analogy with \cite{BCW} the corresponding function $F(u_0)=u(t,x)\in C([0,+\infty),\B^1(\R^n))$ may be called a renormalized solution of (\ref{1}), (\ref{1I}) with possibly unbounded almost periodic initial data $u_0$. By the approximation techniques all our results can be extended to the case of renormalized almost periodic solutions.
\end{remark}

\textbf{Acknowledgement.}
This work was supported by the Ministry of Education and Science of the Russian Federation (project no. 1.445.2016/FPM) and by the Russian Foundation for Basic Research (grant no. 15-01-07650-a).

\end{document}